\newtheorem{theorem}{Theorem}[]
\newtheorem{lemma}[theorem]{Lemma}
\newtheorem{corollary}[theorem]{Corollary}
\newtheorem{conjecture}[theorem]{Conjecture}
\theoremstyle{definition}
\def \St {\mathrm{St}}
\DeclareMathOperator{\proj}{proj}
\renewcommand{\O}{\Omega}
\begin{document}

\title{Completely reducible subcomplexes of spherical buildings}

\author{Chris Parker} \author{ Katrin Tent}
\address{Chris Parker\\
School of Mathematics\\
University of Birmingham\\
Edgbaston\\
Birmingham B15 2TT\\
United Kingdom} \email{c.w.parker@bham.ac.uk}

\address{Katrin Tent\\Mathematisches 
Institut \\
Universit\"at M\"unster\\
Einsteinstrasse 62\\
48149 M\"unster\\
Germany 
}
\email{tent@uni-muenster.de}

\maketitle

In 2005 Serre in \cite{Serre} introduced the notion of complete reducibility in spherical buildings. He went on to point out the following conjecture \cite[Conjecture 2.8]{Serre} which he attributes to Tits from the 1950's.

\begin{conjecture}[Tits' Centre Conjecture] \label{conj1} Suppose that  $\Delta$ is a spherical building and $\O$ is a convex subcomplex of $\Delta$. Then (at least) one of the following holds:
\begin{enumerate}
\item [(a)] for each simplex $A$ in $\O$, there is a simplex $B$ in $\O$ which is opposite to $A$ in $\Delta$; or \item[(b)] there exists a nontrivial simplex $A'$ in $\O$ fixed by any automorphism of $\Delta$ stabilizing $\Omega$.
\end{enumerate}
\end{conjecture}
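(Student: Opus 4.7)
The plan is to proceed by induction on the rank of $\Delta$, with the rank-one case being essentially trivial: a convex subcomplex of a rank-one building is either a single vertex (fixed by any stabilizing automorphism, giving (b)) or contains an opposite pair of vertices (giving (a)); the residual case of several vertices with no opposite pair can be handled directly, as the opposite-less vertices then form a canonical invariant nonempty set. For the inductive step, suppose $\Delta$ has rank at least two and that (a) fails for $\Omega$. Let $\Sigma$ denote the collection of simplices of $\Omega$ having no opposite in $\Omega$; then $\Sigma$ is nonempty and $\Aut(\Delta)_\Omega$-invariant. Fix a simplex $A \in \Sigma$ of maximal dimension.

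The first move is to analyse the projection $\proj_A \Omega$ into the residue $\St(A)$, which is a spherical building of strictly smaller rank. Convexity of $\Omega$ should transfer to $\proj_A \Omega$, and the stabilizer of $A$ inside $\Aut(\Delta)_\Omega$ acts on this projection. Applying the inductive hypothesis to $\proj_A \Omega \subseteq \St(A)$, either every simplex of $\proj_A \Omega$ has an opposite there, or there is a nontrivial simplex $A^*$ of $\proj_A \Omega$ fixed by the induced action. In the second case, the simplex obtained by joining $A$ with $A^*$ in $\Delta$ ought to give a nontrivial simplex of $\Omega$ fixed by the stabilizer of $A$; one must then argue that this construction is independent of the choice of $A \in \Sigma$ (for instance by intersecting over all maximal $A \in \Sigma$, or by choosing $A$ canonically from $\Sigma$) in order to obtain full $\Aut(\Delta)_\Omega$-invariance.

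The main obstacle is the alternative case, in which conclusion (a) holds inside every projection $\proj_A \Omega$, $A \in \Sigma$. The strategy is to exploit the tension between internal completeness at each $A \in \Sigma$ and the external failure of $A$ to have an opposite in $\Omega$. Concretely, one considers the nonempty set of simplices of $\Delta$ opposite to $A$ and studies how they interact with $\Omega$ via minimal galleries and the projection map. Convexity of $\Omega$ should force parts of apartments through $A$ and its ``near-opposites'' within $\Omega$ to lie in $\Omega$, while completeness of each relevant projection should then bootstrap this into producing an actual opposite of $A$ inside $\Omega$, a contradiction. The hardest part is making this gallery-and-projection analysis uniform across Dynkin types; a uniform treatment likely hinges on a clean combinatorial lemma relating convex closures of opposite simplex pairs to the apartments they span, thereby bypassing the type-by-type case analysis used in earlier partial resolutions of the conjecture.
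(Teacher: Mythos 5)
First, a point of framing: the statement you are proving is stated in the paper as a \emph{conjecture}, not a theorem. The paper does not prove it in this generality; it proves the much weaker Theorem~\ref{vertextype} (if every vertex of a single fixed type in a convex chamber subcomplex has an opposite, the subcomplex is completely reducible) and deduces the Centre Conjecture only for convex \emph{chamber} subcomplexes of \emph{classical} type, by exhibiting the centre explicitly as an intersection of hyperplanes (type $\mathrm A_n$) or as the span of the type-$1$ vertices without opposites (types $\mathrm B_n$, $\mathrm C_n$, $\mathrm D_n$). So any complete argument you give would be going far beyond the paper; the full conjecture was only settled through the combined, lengthy work of M\"uhlherr--Tits, Leeb--Ramos-Cuevas and Ramos-Cuevas cited in the bibliography.

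Your proposal, as written, has a genuine gap exactly at the point where the real difficulty of the conjecture sits. In the ``alternative case'' --- where $A$ has no opposite in $\O$ but every simplex of $\proj_A\O$ has an opposite in $\St A\cap \O$ --- you say that convexity and completeness of the projections ``should bootstrap'' into an opposite of $A$ in $\O$, but you give no mechanism for this, and no such mechanism is known to work uniformly: this is precisely the step that forced the published proofs into long type-by-type analyses. There is no a priori contradiction between $A$ lacking an opposite in $\O$ and $\St A\cap\O$ being completely reducible, so the ``tension'' you invoke must be manufactured by a concrete construction (compare the delicate chain of projections $C_0\mapsto C_0'\mapsto C_1\mapsto x_1\mapsto y_0\mapsto y_2\mapsto\cdots$ in the paper's proof of Theorem~\ref{vertextype}, which already requires irreducibility and the chamber-complex hypothesis just to promote opposites from type $J$ to type $J\cup\{i\}$). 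A second unresolved issue is equivariance in your case (b): the simplex $A*A^\ast$ is fixed only by the stabilizer of $A$ in $\Aut(\D)_\O$, and your proposed fixes (intersecting over all maximal $A$ in $\Sigma$, or choosing $A$ ``canonically'') do not obviously produce a nonempty invariant simplex --- the intersection of the various candidate centres can perfectly well be empty, and $\Sigma$ has no canonical element in general. Until both of these points are supplied with actual arguments, the proposal is a plausible strategy outline rather than a proof.
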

If possibility (a) in the conjecture arises we say that $\Omega$ is \emph{completely reducible} and if (b) is the case, then the simplex $A'$ is called a \emph{centre} of $\Omega$. If alternative (a) holds then $\Omega$ is a possibly thin subbuilding of $\Delta$ (see \cite{Kramer}).

If $G$ is an algebraic group with associated building $\Delta$, then a subgroup $H$ of $G$ is called \emph{completely reducible} provided that whenever it is a subgroup of a parabolic subgroup  of $G$ it  is contained in a Levi complement of that parabolic subgroup. In this case, the convex subcomplex of $\Delta$ fixed by $H$ is completely reducible. Conversely if the subcomplex of $\Delta$ fixed by a subgroup $H$ of a parabolic subgroup of $G$ is completely reducible, then so is $H$. This relationship between complete reducibility of subcomplexes of the building and completely reducible subgroups of parabolic subgroups has lead to a  source of fruitful research of which we particularly mention \cite[Theorem 3.1]{Bates} in which they prove the conjecture in the case that $\Omega$ is the fixed point set of some subgroup $H$.

In the more general setting,  for the classical buildings and  buildings of rank 2 the conjecture was proved by M\"uhlherr and Tits \cite{Muhlherr-Tits} in 2006. For buildings of exceptional type $\mathrm E_6$ ,$\mathrm E_7$ and $\mathrm E_8$ the conjecture has been  proved by Leeb and Ramos Cuevas \cite{LC,C} using, in part, some of the observations presented in this paper. They also include the proof of the conjecture for buildings of type $\mathrm F_4$, which was first
presented by the authors at a meeting in Oberwolfach in January 2007 \cite{PT}.
All of the investigations of the Centre Conjecture have  used the lemma
of Serre's \cite{Serre} which states that $\O$ is completely reducible if \emph{every} vertex of $\O$ has an opposite. For chamber complexes,
we can prove the following stronger assertion and thereby obtain a very short proof of  the Centre Conjecture for convex chamber subcomplexes of classical buildings.

\begin{theorem}\label{vertextype} Let $\Delta$ be an irreducible spherical building of type $(W,I)$.
Let $\Omega$ be a convex chamber subcomplex of $\Delta$.
If for some $k\in I$ every vertex of type $k$ in  $\Omega$ has an opposite in $\Omega$, then
$\Omega$ is completely reducible.
\end{theorem}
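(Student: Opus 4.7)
By the lemma of Serre cited just above the theorem, $\Omega$ is completely reducible as soon as every vertex of $\Omega$ has an opposite in $\Omega$. I will in fact aim for the stronger statement that every \emph{chamber} of $\Omega$ has an opposite chamber in $\Omega$; this suffices because opposite chambers in a spherical building pair their vertices of corresponding types into opposite pairs, so the type-$\sigma(j)$ vertex of an opposite of a chamber through a vertex $v$ of type $j$ is an opposite of $v$.

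Fix a chamber $C \in \Omega$ and let $u$ be its vertex of type $k$. By hypothesis $u$ has an opposite $u' \in \Omega$, and since $\Omega$ is a chamber subcomplex we may choose a chamber $D \in \Omega$ containing $u'$. Because $u$ and $u'$ are opposite and $u \in C$, there is a unique chamber $C^*$ of $\Delta$ opposite to $C$ and containing $u'$ (realised, for example, as the chamber opposite $C$ in any apartment through $C$ and $u'$, these being equivalent because opposition of chambers in a spherical building is a local property). My aim is to prove $C^* \in \Omega$. By convexity of $\Omega$, every chamber on a minimal gallery from $C$ to $D$ lies in $\Omega$; each such chamber carries its own vertex of type $k$ whose opposite is then furnished in $\Omega$ by the hypothesis, yielding further chambers of $\Omega$ through the chamber subcomplex property, and taking further convex hulls and iterating yields a growing family of chambers of $\Omega$. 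The irreducibility of $\Delta$, equivalently connectedness of the Dynkin diagram, is intended to enter at this point in transporting opposition across panels of each type.

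The main obstacle and technical heart of the proof is to show that this iteration in fact forces $C^*$ (or at least some chamber opposite $C$) into $\Omega$ in finitely many steps. The difficulty is that the convex hull of $C$ together with the single opposite vertex $u'$, even inside one apartment, does \emph{not} contain $C^*$: it consists only of the chambers on minimal galleries from $C$ to $\proj_{u'}(C)$, and $\proj_{u'}(C)$ is the chamber through $u'$ at \emph{minimal}, not maximal, gallery distance from $C$. Consequently one cannot simply read $C^*$ off from $\mathrm{conv}(C,u')$; instead the argument has to extract the type-$k$ vertex of each newly produced chamber of $\Omega$, invoke the hypothesis to bring its opposite into $\Omega$, and combine these via convex hulls to reach a chamber opposite $C$. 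Making this propagation precise in an arbitrary irreducible spherical building — in particular controlling that the distance from $C$ to the chambers so produced in $\Omega$ can be pushed all the way up to the diameter — is where the substantive work of the proof lies.
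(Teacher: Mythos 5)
There is a genuine gap: your proposal correctly reduces the theorem to producing opposite chambers and correctly diagnoses where the difficulty lies (the convex hull of $C$ and $u'$ only reaches $\proj_{u'}(C)$, and irreducibility must enter through the connectedness of the Dynkin diagram), but it stops exactly at that point. The ``iteration'' of extracting type-$k$ vertices of newly produced chambers, taking their opposites, and forming further convex hulls is never shown to terminate in a chamber opposite $C$, and indeed there is no obvious monotone quantity forcing the gallery distance from $C$ to chambers of $\Omega$ up to the diameter; this propagation step is the entire content of the theorem, not a technicality to be deferred. A secondary inaccuracy: in a thick building there is no \emph{unique} chamber opposite $C$ containing $u'$ --- the chambers of $\St u'$ opposite $C$ in $\Delta$ are exactly those opposite $\proj_{u'}(C)$ inside the building $\St u'$, and there are many of them; different apartments through $C$ and $u'$ genuinely produce different ones.

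The paper's proof organises the induction differently, and this is what makes it work. Rather than fixing one chamber and trying to push its opposite into $\Omega$, one chooses $J\subseteq I$ \emph{maximal} such that every simplex of type $J$ in $\Omega$ has an opposite in $\Omega$ (nonempty since $J=\{k\}$ qualifies), and shows $J=I$ by contradiction: if $i\in I\setminus J$ is adjacent in the Dynkin diagram to some $j\in J$, then for any simplex $z$ of type $J\cup\{i\}$ in $\Omega$ an opposite is constructed explicitly by a chain of projections inside $\Omega$ (legitimate by convexity), using the facts that projections of opposite simplices to a common simplex $R$ are either opposite in $\St R$ or both equal to $R$ (Corollary~\ref{cor1}), that the non-commuting reflections for $i$ and $j$ force the vertex of type $i$ to survive the relevant projection, and Tits' \cite[Proposition 3.29]{TitsLN} to convert opposition in a star into opposition in $\Delta$. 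You would need to supply an argument of comparable substance --- in particular some mechanism that transports opposition across a single edge of the Dynkin diagram at a time --- before your outline becomes a proof.
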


Notice that the hypothesis that $\Delta$ is irreducible in Theorem~\ref{vertextype} may not be dropped as is easily seen by taking a product of two buildings and choosing a convex subcomplex which is completely reducible in one factor and has a centre in the second factor.
 Our notation follows \cite{TitsLN}. So  given a simplex $R$ of type $J \subseteq I$, the collection of all simplices containing $R$ form a building $\mathrm {St}R$ of type $(W_{I\setminus J}, I\setminus J)$.
 Of particular importance to us are the \emph{projection maps}: given simplices $R$ and $S$,  $\proj_R(S)$ is the unique simplex of $\mathrm{St}R$  which is contained in every shortest gallery from $S$ to $R$ (see \cite[Proposition 2.29]{TitsLN}) and is called the \emph{projection} of $S$ to $R$. Note that if $\Omega$ is a convex subcomplex of $\Delta$ then, for all simplices $R$ and $S$ in $\O$, we have $\proj_RS \in \Omega$ and this is the crucial property of convexity that we use in the proof of Theorem~\ref{vertextype}. We refer the reader to \cite[2.30 and 2.31]{TitsLN} for many properties of projection maps.  Two chambers in $\Delta$ are \emph{opposite} in $\Delta$  provided their convex hull is an apartment of $\Delta$.  Two simplices $R$ and $R'$  of $\Delta$ are \emph{opposite} in $\Delta$ if every chamber of $\mathrm {St} R$ has an opposite  in  $\mathrm {St}  R'$.

\begin{lemma}\label{opp}
Suppose that $x$ and $y$ are opposite chambers in $\Delta$. Let $\Sigma $ be the  convex hull of $x$ and $y$ in $\Delta$ and   $R$ be a simplex  in
$\Sigma$. Then $\proj_R(x)$ and $\proj_R(y)$ are opposite in $\mathrm {St}R$.
\end{lemma}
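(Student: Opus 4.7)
The plan is to reduce the claim to an explicit calculation inside the apartment $\Sigma$. Since $x$ and $y$ are opposite, $\Sigma$ is an apartment of $\Delta$, and I identify it with the Coxeter complex of $(W,I)$ so that $x$ corresponds to the identity $e$ and $y$ to the longest element $w_0$. Under this identification, chambers of $\Sigma$ are elements of $W$ with gallery distance $d(u,v)=\ell(u^{-1}v)$. Because $\Sigma$ is convex and contains $R$, $x$ and $y$, both projections $\proj_R x$ and $\proj_R y$ also lie in $\Sigma$, so the entire computation can be carried out inside this Coxeter complex.

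Writing $J$ for the type of $R$ and $K = I \setminus J$, I next recall the standard coset description: $R$ corresponds to a left coset $w_R W_K$, where I take $w_R$ to be the unique minimal length representative, so that $\ell(w_R u)=\ell(w_R)+\ell(u)$ for every $u \in W_K$. The chambers of $\St R$ lying in $\Sigma$ are then precisely $\{w_R u : u \in W_K\}$, forming an apartment of the spherical building $\St R$ of type $(W_K,K)$, with internal gallery distance $d_{\St R}(w_R u,w_R u')=\ell(u^{-1}u')$ and diameter $\ell(w_0^K)$, the length of the longest element of $W_K$. A direct computation now determines the two projections: the chamber of $\St R$ nearest $x=e$ minimises $\ell(w_R u)=\ell(w_R)+\ell(u)$, so $\proj_R x = w_R$; and, using the identity $\ell(w_0 v)=\ell(w_0)-\ell(v)$, the chamber nearest $y=w_0$ instead \emph{maximises} $\ell(w_R u)$, giving $\proj_R y = w_R w_0^K$.

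Combining these yields $d_{\St R}(\proj_R x,\proj_R y) = \ell(w_0^K)$, which equals the diameter of $\St R$, so the two projected chambers are indeed opposite in $\St R$, as required. I do not anticipate any serious obstacle beyond correctly invoking the standard facts about spherical Coxeter complexes from \cite{TitsLN} — the identification of apartments with Coxeter complexes, the coset description of stars of simplices, the length additivity for minimal coset representatives, and the length identity $\ell(w_0 v)=\ell(w_0)-\ell(v)$ in a finite Coxeter group.
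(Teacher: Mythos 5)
Your proof is correct, but it takes a genuinely different route from the paper. The paper never identifies the projections explicitly: it takes an arbitrary chamber $z$ opposite $\proj_R(x)$ in $\St R$ and shows $z=\proj_R(y)$ by a purely metric count, combining the gate property $\mathrm{dist}(x,z)=\mathrm{dist}(x,\proj_R(x))+\mathrm{dist}(\proj_R(x),z)$ with the fact that every chamber of the convex hull $\Sigma$ of two opposite chambers lies on a shortest gallery between them; the inequality $\mathrm{dist}(\proj_R(x),\proj_R(y))\le\mathrm{dist}(\proj_R(x),z)$ then forces $\mathrm{dist}(\proj_R(y),z)=0$. You instead coordinatize $\Sigma$ as the Coxeter complex, with $x=e$, $y=w_0$, $R=w_RW_K$, and compute $\proj_R(x)=w_R$ and $\proj_R(y)=w_Rw_0^K$ directly from length additivity for minimal coset representatives and the identity $\ell(w_0v)=\ell(w_0)-\ell(v)$. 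Your version buys an explicit formula for both projections (and makes the opposition visible as attaining the diameter $\ell(w_0^K)$ of $\St R$), at the cost of invoking more Coxeter-group bookkeeping; the paper's version is coordinate-free and slightly shorter. The only points you should make fully explicit are the ones you already flag in passing: that both projections lie in $\Sigma$ (by convexity of apartments), so that minimizing over $\St R\cap\Sigma$ really does compute the projection, and that gallery distances measured inside $\Sigma$ agree with those in $\Delta$. With those standard facts from \cite{TitsLN} in place, the argument is complete.
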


\begin{proof}
 Set $x_1=\proj_R(x)$ and $y_1=\proj_R(y)$. Then $x_1$ and $y_1$ are chambers by  \cite[Proposition 2.29]{TitsLN}. Let $z$ be
opposite $x_1$ in $\mathrm{St}R$. Then we have $\mathrm{dist}(x,z)= \mathrm{dist}(x,x_1)+\mathrm{dist}(x_1,z)$ and
$\mathrm{dist}(y,z)= \mathrm{dist}(y,y_1)+ \mathrm{dist}(y_1,z)$ by \cite[2.30.6]{TitsLN}. Therefore $\mathrm{dist}(x,y) =
\mathrm{dist}(x,x_1)+\mathrm{dist}(x_1,z)+\mathrm{dist}(y_1,z)+ \mathrm{dist}(y,y_1)$ as every chamber of $\Sigma$
is on a shortest gallery between $x$ and $y$ by \cite[2.35 (iv)]{TitsLN}. On the other hand, as
$x_1$ and $z$ are opposite in $\mathrm{St}R$,  $\mathrm{dist}(x_1,y_1) \le \mathrm{dist}(x_1,z)$ and so
\begin{eqnarray*}\mathrm{dist}(x,y)&=& \mathrm{dist}(x,x_1)+\mathrm{dist}(x_1,y_1)+ \mathrm{dist}(y_1,y)\\& \le& \mathrm{dist}(x,x_1)+\mathrm{dist}(x_1,z)+
\mathrm{dist}(y,y_1).\end{eqnarray*} It follows that $\mathrm{dist}(y_1,z)=0$ and hence $z=y_1$ as
claimed.
\end{proof}

  The following observation is especially important to us.

\begin{corollary}\label{cor1} Suppose that $R$, $X$ and $Y$ are simplices  in the apartment $\Sigma$   with $X$ opposite $Y$.
 Then either \begin{enumerate} \item [(a)]$\proj_R(X)$ is opposite  $\proj_R(Y)$ in $\St R$; or \item[(b)] $R=\proj_R(X)=\proj_R(Y)$. \end{enumerate}
\end{corollary}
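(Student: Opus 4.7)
The plan is to reduce to Lemma~\ref{opp} by enlarging $X$ and $Y$ to opposite chambers. Pick any chamber $x$ of $\Sigma$ containing $X$ and let $y$ be the unique chamber of $\Sigma$ opposite to $x$; since $X$ and $Y$ are opposite simplices of $\Sigma$, necessarily $Y$ is a face of $y$. Lemma~\ref{opp} then gives that $\proj_R(x)$ and $\proj_R(y)$ are opposite chambers in $\St R$, and hence span an apartment $\Sigma'$ of $\St R$ — in fact, $\Sigma'$ is the set of simplices of $\Sigma$ containing $R$. Since projection to $R$ preserves face inclusion on the source, $\proj_R(X)$ is a face of $\proj_R(x)$ and $\proj_R(Y)$ is a face of $\proj_R(y)$, so both projections live in $\Sigma'$.

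To finish I need to show that, as faces of the opposite chambers $\proj_R(x)$ and $\proj_R(y)$ of $\Sigma'$, either $\proj_R(X)$ and $\proj_R(Y)$ are antipodal — equivalently opposite in $\St R$ — or both collapse to $R$. Write $\iota_{\Sigma'}$ for the antipodal involution of $\Sigma'$. The crux is the commutation identity
\[
  \iota_{\Sigma'}(\proj_R(X)) = \proj_R(Y).
\]
Granting this, the degenerate case is automatic: $\iota_{\Sigma'}$ fixes the simplex $R$ itself (it is the common face of every simplex of $\Sigma'$), so $\proj_R(X)=R$ forces $\proj_R(Y)=R$, which is case~(b); otherwise $\proj_R(X)$ and $\proj_R(Y)$ are antipodal in $\Sigma'$, giving case~(a).

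The main obstacle is therefore this commutation. I would prove it combinatorially, by noting that
\[
  \proj_R(X) = \bigcap_{x' \supseteq X}\proj_R(x'),
\]
where $x'$ ranges over chambers of $\Sigma$ containing $X$ (and symmetrically for $\proj_R(Y)$). Applying Lemma~\ref{opp} to each such $x'$ together with its opposite chamber $y'$ in $\Sigma$ — which contains $Y$ because $X$ and $Y$ are opposite in $\Sigma$, and which ranges over exactly the chambers of $\Sigma$ containing $Y$ as $x'$ varies — yields $\iota_{\Sigma'}(\proj_R(x')) = \proj_R(y')$ for every such pair. Since $\iota_{\Sigma'}$ is a simplicial automorphism of $\Sigma'$ and hence commutes with taking intersections of faces, the required identity falls out. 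Verifying the intersection characterisation of $\proj_R$ above (or, equivalently, a type-theoretic commutation between $\proj_R$ and the longest elements $w_0$ of $W$ and $w_0^R$ of the parabolic $W_R$) is the one technical point I expect to need explicit care.
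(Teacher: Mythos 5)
Your proposal is correct and follows essentially the same route as the paper: both pair each chamber of $\Sigma$ containing $X$ with its opposite chamber (which contains $Y$), apply Lemma~\ref{opp} to each pair, and deduce the statement for the simplices $\proj_R(X)$ and $\proj_R(Y)$. The only difference is that you make explicit, via the opposition involution of the apartment $\Sigma'$ of $\St R$ and the intersection characterisation of $\proj_R$, the step the paper leaves implicit in the phrase ``this means every chamber of $\proj_R(X)$ has an opposite in $\St R$ contained in $\proj_R(Y)$''.
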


\begin{proof}
We can pair the chambers containing  $X$ and $Y$ into opposite pairs $(x,y)$.
Then $\proj_R(x)$ is opposite $\proj_R(y)$ in $\St R$ by Lemma~\ref{opp}.  This means every
chamber of $\proj_R(X)$ has an opposite in $\St R$ contained in
$\proj_R(Y)$.
\end{proof}

We can now prove Theorem~\ref{vertextype}. So suppose that $\Omega$ is a convex  chamber subcomplex of $\Delta$. We recall that  $\Omega$ is a subcomplex, means that if a simplex is in $\Omega$ then so are all of its faces and $\Omega$ is a chamber complex means that every simplex is contained in a chamber.
 We repeatedly use the fact that, as $\Omega$ is convex,  projections between simplices  of $\Omega$ are contained in $\Omega$.

By hypothesis, we may choose $J\subseteq I$ maximally so that every simplex of type $J$ in $\Omega$ has an opposite in $\Omega$.  It suffices to show that $J= I$,   as, if  a chamber has an opposite, then so does every face of that chamber. So suppose that $J \neq I$. Since $\Delta$ is irreducible there is  $i\in I\setminus J$  such that $i$ is a neighbour of some $j\in J$ in the Dynkin diagram of $\Delta$.

Let $z$ be of type $J\cup \{i\}$ in $\Omega$,   $x_0$ be the face of $z$ of type $J$, $\ell$ the vertex of $z$  of type~$i$ and let $C_0$ be a chamber of $\Omega$ containing $z$.
We will construct an opposite for $z$. 

Let  $p$ be a maximal face of $C_0$ with missing vertex of type $j$ and $x_0^o$ be an opposite of $x_0$ in $\Omega$. Then $\ell$ is a vertex of $p$.
Put $C_0'=\proj_{x_0^o} C_0$ and
 $C_1=\proj_{p}C_0'$. Then, by Corollary~\ref{cor1}, $C_0= \proj_{p}(x_0) \neq C_1$.  Let $x_1$ be the face of $C_1$  of type $J$.
So $x_1\neq x_0$ and setting  $y_0=\proj_{x_1}x_0$ we see that, as the reflections corresponding to $i$ and $j$ do not commute, $y_0$ has   $x_1$ as a face and  $\ell$ as a vertex.
We will first find an opposite of the simplex~$y_0$.

Let $y_1=\proj_{x_1}x_0^o$, so $y_1$ and $y_0$ are opposite in  $\St x_1$ by Corollary~\ref{cor1}.
Let $x_1^o$ be opposite $x_1$. By  \cite[Proposition 3.29]{TitsLN}, we have $y_2=\proj_{x_1^0}(y_1)$
is opposite $y_0$. Since $y_0$ contains the vertex $\ell$, $y_2$ has an opposite of $\ell$ as a vertex and this is contained in $\Omega$.

In order to find an opposite for the simplex $z$, notice that
$\proj_{\ell}x_0=z$. Let $z_1=\proj_{\ell}x_0^o$, so $z_1$ and $z$ are opposite in   $\St \ell$ by Corollary~\ref{cor1}.
Using \cite[Proposition 3.29]{TitsLN} again, the projection of $z_1$ to the opposite of $\ell$ in $\St y_2$ now yields the required opposite of $z$ in $\Omega$.
\qed
\begin{corollary}
The Centre Conjecture holds for convex chamber subcomplexes of  irreducible spherical buildings of classical type.
\end{corollary}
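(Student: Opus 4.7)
The plan is to obtain the centre conjecture from Theorem~\ref{vertextype} by contradiction: assume $\Omega$ is a convex chamber subcomplex of an irreducible spherical building $\Delta$ of classical type which is neither completely reducible nor admits a centre, and derive a contradiction. By the contrapositive of Theorem~\ref{vertextype}, for every vertex type $k\in I$ there is a vertex of type $k$ in $\Omega$ with no opposite in $\Omega$. I plan to leverage the linear-algebraic description of classical buildings to convert this deficiency into an invariant simplex of $\Omega$.

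For the polar space cases ($B_n$, $C_n$, $D_n$) I would concentrate on vertex type $k=1$, namely the singular points of the polar space. Two such points are opposite in $\Delta$ precisely when they span a hyperbolic line, equivalently when they are non-perpendicular. The hypothesis therefore yields a singular point $P\in\Omega$ perpendicular to every singular point of $\Omega$. Let $R$ be the set of all singular points of $\Omega$ with this property. Then $R$ is a totally singular subspace of the ambient formed space, is $\Stab(\Omega)$-invariant, and contains $P$, so it is non-trivial. Using that $\Omega$ is a chamber complex and that projections of simplices of $\Omega$ remain in $\Omega$, I would argue that $R$ is realised by a simplex of $\Omega$, producing the desired centre.

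For type $A_n$ the argument is dual: taking $k=1$, a point $P$ of $\Omega$ with no opposite hyperplane in $\Omega$ must be contained in every hyperplane of $\Omega$, and the intersection of all hyperplanes of $\Omega$ gives an $\Stab(\Omega)$-invariant non-trivial subspace. Convexity of $\Omega$ and the chamber condition then promote this subspace to a simplex of $\Omega$. In each case, the stabiliser of $\Omega$ in $\Aut(\Delta)$ permutes the defining geometric data (intersection of hyperplanes, or radical of the restricted form), hence fixes the resulting simplex, which is then a centre of $\Omega$.

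The main obstacle I anticipate is the last step: showing that the canonically defined invariant subspace (the joint intersection of hyperplanes for $A_n$, or the perpendicular radical for the polar types) actually corresponds to a simplex \emph{in} $\Omega$, rather than merely sitting inside the ambient building. Here I would exploit the convex chamber hypothesis—specifically that for any simplices $R,S\in\Omega$ we have $\proj_RS\in\Omega$—to slide from an arbitrary chamber of $\Omega$ towards the radical/intersection subspace and conclude that it is attained as the type-$J$ face of some chamber of $\Omega$.
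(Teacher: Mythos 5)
Your proposal follows essentially the same route as the paper: reduce to the type-$1$ vertices via Theorem~\ref{vertextype}, then take as centre the intersection of all hyperplanes of $\Omega$ in type $\mathrm A_n$, and the (totally singular) span of the type-$1$ vertices without opposites in the polar types, noting in each case that this object is canonically defined and hence fixed by the stabiliser of $\Omega$. The only difference is that you explicitly flag the step of realising this canonical subspace as a simplex \emph{in} $\Omega$, which the paper leaves implicit; your suggested use of convexity (projections between vertices of $\Omega$ stay in $\Omega$, so intersections and spans of pairs are attained, and finite dimensionality finishes the job) is exactly how that step is filled in.
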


\begin{proof} For buildings of type $\mathrm A_n,\mathrm B_n$, $\mathrm C_n$ and $\mathrm D_n$, we identify the simplices of $\Delta$ with  flags of subspaces (singular subspaces, isotropic subspaces) in the appropriate vector spaces.  We then consider the vertices of $\Delta$ corresponding to $1$-dimensional  subspaces (for $\mathrm A_n$) and $1$-dimensional isotropic/singular  subspaces in the other cases and call them type 1 vertices.

Since $\Omega$ is a chamber subcomplex, $\Omega$ contains vertices of every type.  If every type 1 vertex has an opposite in  $\Omega$, then $\Omega$ is completely reducible by Theorem~\ref{vertextype}. So we  suppose that this is not the case and aim to identify a centre.

Suppose that $\Delta$ has type $\mathrm A_n$ and assume that some type $1$ vertex $w$ of $\Omega$ does not have an opposite in $\Omega$.
Then $w$ is contained in all the  hyperplanes of $\Omega$. Thus the intersection of all
hyperplanes of $\Omega$
is the required centre.

Suppose that  $\Delta$ has type $\mathrm B_n, \mathrm C_n$ or $\mathrm D_n$. Then a vertex of type $1$ in $\Omega$ has no opposite in $\Omega$ if and only if it is collinear with every other vertex of type $1$  in $\Omega$.
Hence the set of all vertices  of type 1 in  $\Omega$ having no opposite span 
a totally isotropic (singular) subspace, and this is  the centre.
\end{proof}

\end{document}